\journal{Discrete Applied Mathematics}
\newtheorem{theorem}{Theorem}
\newtheorem{corollary}[theorem]{Corollary}
\begin{document}

\begin{frontmatter}

\title{Minimum $k$-path vertex cover}

\author[maribor,ljubljana]{Bo\v stjan Bre\v sar}
\ead{bostjan.bresar@uni-mb.si}

\author[umvupjs]{Franti{\v s}ek Kardo{\v s}}
\ead{frantisek.kardos@upjs.sk}

\author[icsupjs]{J{\'a}n Katreni{\v c}}
\ead{jan.katrenic@upjs.sk}

\author[icsupjs]{Gabriel Semani{\v s}in}
\ead{gabriel.semanisin@upjs.sk}

\address[maribor]{Faculty of Natural Science and Mathematics, University of Maribor, Slovenia}
\address[ljubljana]{Institute of Mathematics, Physics and Mechanics, Ljubljana, Slovenia}

\address[umvupjs]{
     Institute of Mathematics,
     P.J. {\v S}af{\'a}rik University,
     Ko{\v s}ice, Slovakia      
}

\address[icsupjs]{
     Institute of Computer Science,
     P.J. {\v S}af{\'a}rik University,
     Ko{\v s}ice, Slovakia 
}

\begin{abstract}
A subset $S$ of vertices of a graph $G$ is called a {\em $k$-path vertex cover} 
if every path of order $k$ in $G$ contains at least one vertex from $S$. 
Denote by $\psi_k(G)$ the minimum cardinality of a $k$-path vertex cover in $G$.
It is shown that the problem of determining $\psi_k(G)$ is NP-hard for each $k\geq2$, while for trees the problem can be solved in linear time. 
We investigate upper bounds on the value of $\psi_k(G)$ and provide several estimations and exact values of $\psi_k(G)$. We also prove that $\psi_3(G)\le (2n+m)/6$, for every graph $G$ with $n$ vertices and $m$ edges. 
\end{abstract}


\end{frontmatter}

\section{Introduction and motivation}

In this paper we consider finite graphs without loops and multiple edges and use standard graph theory notations (see e.g.~\cite{West96}). In particular, 
by the {\em order of a path $P$} we understand the number of vertices on $P$ while the {\em length of a path} is the number of edges of $P$. 

We introduce a new graph invariant that generalizes the intensively studied concept of vertex cover.
Our research is motivated by the following problem \cite{Novotny2010} related to secure communication in wireless sensor networks (shortly WSN). The topology of WSN can be represented by a graph, in which vertices represent sensor devices and edges represent communication channels 
between pairs of sensor devices.
Traditional security techniques cannot be applied directly to WSN, because sensor devices are limited in their computation, energy, communication capabilities. Moreover, they are often deployed in accessible areas, where they can be rather easily captured by an attacker.
In general, a standard sensor device is not considered as tamper-resistant and it is undesirable to make all devices of a sensor network tamper-proof due to increasing cost. Therefore, the design of WSN security protocols has become a challenge in security research.

We focus on the Canvas scheme \cite{Goll01,Vogt,Novotny2010,NovotnyF2010} which should provide
data integrity in a sensor network.
The $k$-generalized Canvas scheme \cite{Novotny2010} guarantees data integrity under the assumption 
that at least one node which is not captured exists on each path of the length $k-1$ in the communication graph. The scheme combines  the properties of cryptographic primitives and the network topology.
The model distinguishes between two kinds of sensor devices -- {\em protected} and {\em unprotected}. The attacker is unable to copy secrets from a protected device. This property can be realized by making
the protected device tamper-resistant or placing the protected device on a safe
location, where capture is problematic. On the other hand, an unprotected device
can be captured by the attacker, who can also copy secrets from the device
and gain control over it. During the deployment and initialization of a sensor
network, it should be ensured, that at least one protected node exists on each
path of the length $k-1$ in the communication graph \cite{Novotny2010}. The problem to minimize the cost of the network by minimizing the number of protected vertices is formulated in \cite{Novotny2010}.

Formally, let $G$ be a graph and let $k$ be a positive integer. A subset of vertices $S\subseteq V(G)$ is called a {\em $k$-path vertex cover} if every path of order $k$ in $G$ contains at least one vertex from $S$. We denote by  $\psi_k(G)$ the minimum cardinality of a $k$-path vertex cover in $G$. 

Clearly, $2$-path vertex cover corresponds to the well know vertex cover (a subset of vertices such that each edge of the graph is incident to at least one vertex of the set).
Therefore $\psi_2(G)$ is equal to the size of the minimum vertex cover of a graph $G$. Moreover, the value of $\psi_3(G)$ corresponds to the so-called dissociation number of a graph \cite{Boliac,Goring,Yan} defined as follows. A subset of vertices in a graph $G$ is called a {\em dissociation set} if it induces a subgraph with maximum degree $1$. The number of vertices in a maximum cardinality dissociation set in $G$ is called the {\em dissociation number} of $G$ and is denoted by $diss(G)$. Clearly, $\psi_3(G)=|V(G)|-diss(G)$.

A related colouring is the {\em $k$-path chromatic number}, which is
the minimum number of colours that are necessary for colouring the
vertices of $G$ in such a way that each colour class forms a $P_k$-free set
\cite{Aki89,FrBu01,JoSa06,Jin95,MynBr85}, and it is easy to see that $\psi_k(G)\le \frac{\chi_k(G)-1}{\chi_k(G)}|V(G)|$.

\bigskip

Since the minimum vertex cover problem is NP-hard \cite{Karp1972}, it is not surprising that so is the problem of determining $\psi_k$ for each $k\ge 3$. We provide details in Section~\ref{NPC} -- actually, we reduce the minimum vertex cover problem to the minimum $k$-path vertex cover problem.

However, since the question whether there is a $k$-path vertex cover of size at most $t$ can be expressed in the monadic second order logic, by famous Courcelle's theorem \cite{Cour}, the minimum $k$-path vertex cover problem can be solved in linear time on graphs with bounded treewidth, e.g.~trees, series-parallel graphs, outerplanar graphs, etc. 
In Section~\ref{trees} we determine the exact value of $\psi_k$ for trees and present a linear time algorithm which returns an optimal solution for trees. Then Section~\ref{outer} is devoted to outerplanar graphs. We present a tight upper bound for $\psi_3(G)$.

In Section~\ref{sparse} we provide several estimations for the size of minimum $k$-path vertex cover on degree of its vertices. Finally, we prove that $\psi_3(G)\le (2n+m)/6$, for every graph $G$ with $n$ vertices and $m$ edges.

\section{NP-completeness}
\label{NPC}

In this section we prove that the problem of determining $\psi_k$ is NP-complete for every $k\ge 2$.
For $k=2$ this equals to the vertex cover problem (shortly VCP)
which is known to be NP-complete \cite{Karp1972}. For an arbitrary fixed integer $k\geq 2$ the decision version of the $k$-path vertex cover problem can be stated as follows:

\medskip

{\bf $k$-Path Vertex Cover Problem} ($k$-PVCP)

INSTANCE: Graph $G$ and a positive integer $t$.

QUESTION: Is there a $k$-path vertex cover $S$ for $G$ of size at most $t$?

\bigskip

\begin{theorem}
For any fixed integer $k\ge 2$ the $k$-Path Vertex Cover
Problem is NP-complete.
\label{th:np}
\end{theorem}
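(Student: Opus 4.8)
The plan is to establish both membership in NP and NP-hardness; the case $k=2$ is the vertex cover problem (VCP), already known to be NP-complete, so I focus on a fixed $k\ge 3$. Membership in NP is routine: a candidate set $S$ with $|S|\le t$ serves as a certificate, and to verify it one only needs to check that $G-S$ contains no path of order $k$; since $k$ is fixed this can be tested in time $O(n^k)$ by inspecting all $k$-tuples of vertices, which is polynomial. The substance of the proof is therefore the polynomial-time reduction from VCP to $k$-PVCP.

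Given a VCP instance $(G,t)$, I would build a graph $G'$ by attaching to every vertex $v\in V(G)$ a pendant path on $k-2$ new vertices, so that $v$ becomes an endpoint of a path $v=v_0,v_1,\dots,v_{k-2}$; the new graph has $n(k-1)$ vertices and $|E(G)|+n(k-2)$ edges, hence is constructed in polynomial time. The instance produced for $k$-PVCP is $(G',t)$, and the goal is to prove the exact identity $\psi_k(G')=\psi_2(G)$ (the latter being the minimum vertex cover number of $G$), from which the reduction follows immediately.

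Two structural observations drive the correctness argument. First, deleting all edges of $G$ from $G'$ leaves a disjoint union of paths on $k-1$ vertices (each pendant path together with its root), so no path of order $k$ can avoid all edges of $G$; consequently every $P_k$ in $G'$ traverses some edge $uv\in E(G)$ and in particular passes through both $u$ and $v$. This yields the easy inclusion: if $S\subseteq V(G)$ is a vertex cover of $G$, then every $P_k$ contains an endpoint of one of its original edges, and that endpoint lies in $S$, so $S$ is a $k$-path vertex cover of $G'$ and $\psi_k(G')\le \psi_2(G)$. Second, for any new vertex $v_i$ the only vertices reachable from $v_i$ without using $v$ are $v_1,\dots,v_{k-2}$, a set of size $k-2<k$; hence every $P_k$ through $v_i$ must also contain $v$.

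The reverse inequality, which I expect to be the main obstacle, exploits this second observation to ``push'' an optimal solution onto the original vertices. Starting from a minimum $k$-path vertex cover $S'$ of $G'$, whenever $S'$ contains a new vertex $v_i$ I would replace it by the root $v$; since every $P_k$ meeting $v_i$ also meets $v$, the set remains a $k$-path vertex cover and does not grow, so there is a minimum solution $S\subseteq V(G)$. For such an $S$ and any edge $uv\in E(G)$, the path $v_{k-2},\dots,v_1,v,u$ is a $P_k$ whose only original vertices are $u$ and $v$; as $S\subseteq V(G)$ must meet it, one of $u,v$ lies in $S$, so $S$ is a vertex cover of $G$ and $\psi_2(G)\le\psi_k(G')$. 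Combining the two inequalities gives $\psi_k(G')=\psi_2(G)$, completing the reduction and the proof.
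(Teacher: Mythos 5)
Your proposal is correct and follows essentially the same strategy as the paper: reduce VCP to $k$-PVCP by attaching a pendant path to every vertex of $G$, push any new vertices of an optimal $k$-path vertex cover onto their original roots (using the fact that every $P_k$ through a pendant vertex must pass through its root), and then show the resulting set of original vertices is a vertex cover by exhibiting an uncovered $P_k$ for any uncovered edge. The only difference is quantitative: you attach paths on $k-2$ new vertices and witness an uncovered edge $uv$ by the one-sided path $v_{k-2},\dots,v_1,v,u$, whereas the paper attaches paths on only $\lfloor\frac{k-1}{2}\rfloor$ vertices and witnesses it by the path through both endpoints' attachments, of order $2\lfloor\frac{k-1}{2}\rfloor+2\ge k$; both gadgets have polynomial size, so the two reductions are equally valid.
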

\begin{proof}
First, it is not hard to see that $k$-PVCP is in NP. Note that there are at most $O(n^k)$ paths of length $k-1$ in $G$. Thus for a given set $S$ we can decide in polynomial time whether all paths of length $k-1$ are secured by the vertices from $S$.

Note that 2-PVCP coincides with VCP. For $k>2$ we will reduce VCP to $k$-PVCP. 
Let $G$ be an arbitrary graph. Let $G'$ be a graph obtained from $G$ by joining  a path with $\lfloor \frac
{k-1}{2}\rfloor$ new vertices to every vertex of $G$ (i.e.~for $k=3$ or $k=4$ we
attach a leaf to each vertex; for $k=5$ or $k=6$ a path on two
vertices is attached to each vertex of $G$; etc.). We call the vertices of $G$ {\em original vertices},
and others are called {\em new}. In order to finish the proof it suffices to show that the size of a minimum vertex cover 
in $G$ is equal to $\psi_k(G')$.

Suppose that $G'$ has a $k$-path vertex cover $S'$. Suppose that $S'$ contains also a new vertex $x$ (i.e.~it lies on an attached path). Let $x'$ be an original vertex that has the smallest distance from $x$. Note that $x$
secures only the paths that go through the vertex $x'$, since the length of the attached path is less than $k$. 
Hence, if $x$ is replaced in $S'$ by $x'$, all paths remain secured. 
 By consecutive application of this procedure for each new vertex $x$ we obtain a set $S$ comprising
only of the original vertices. 
Clearly, $|S|\leq |S'|$, and by the
 procedure described above, $S$ is also a $k$-path vertex cover. We claim
that $S\subseteq V(G)$ forms a vertex cover in $G$.

 Suppose to the contrary that there is an edge $xy$ in $G$ whose  endvertices $x$ and $y$ are not in $S$.
Consider the path $P$ in $G'$, composed by the path,
attached to $x$, vertices $x$ and $y$, and the path attached to $y$.
Then clearly $P$ contains no vertex from $S$, and it has $2\lfloor \frac
{k-1}{2}\rfloor+2\geq k$ vertices, which is a contradiction. Thus
$S$ is a vertex cover in $G$ of size at most $|S'|$. 

Conversely, by a similar reasoning we can show that a vertex cover of $G$ yields a $k$-path vertex cover set in $G'$ of the same size.
\end{proof}

The reduction described in the proof of Theorem \ref{th:np} also shows that for any approximation rate $r$ one can transform a polynomial time $r$-appro\-xi\-ma\-tion for the minimum $k$-path vertex cover problem
to a polynomial time $r$-approximation algorithm for the minimum vertex cover. 
Using a result of \cite{Dinur04}, we immediately infer that it is NP-hard to approximate $k$-PVCP, for $k>2$,
within a factor of $1.3606$, unless $P=NP$.

A well-known $2$-approximation algorithm which repeatedly puts vertices of an edge into the constructed vertex cover  and removes them from the graph, was discovered independently by F.~Gavril and M.~Yannakakis \cite{Cormen}. A polynomial time approximation with a better constant factor is not known for the vertex cover problem. Similarly, for the minimum $k$-path vertex cover problem we can construct an approximation algorithm in the following way: Find a path on $k$ vertices, put its $k$ vertices into the constructed set and remove them from the graph. Since  at least one of these $k$ vertices belongs to an optimal solution, this provides a $k$-approximation algorithm for the minimum $k$-path vertex cover with a polynomial running time while $k$ is considered as a fixed constant.
 Note that the currently best known randomized algorithm for finding a path on $k$ vertices runs in time $O(2^k n^{O(1)})$, where $n$ denotes the order of a graph (see \cite{Williams09}).

\section{Path vertex cover for trees}
\label{trees}

We begin our investigation with the class of trees, that present an important underlying 
communication topology for WSN. Courcelle's theorem \cite{Cour} guarantees the existence of a linear time algorithm, basically using dynamic programming. In this section, we describe such an algorithm in detail, and we use it then to derive a sharp upper bound 
$\psi_k(T) \le |V(T)|/k$ for an arbitrary tree $T$.

In order to simplify our consideration, consider that the input tree is rooted at a vertex $u$.  By {\em properly rooted subtree} we denote 
a subtree $T_v$, induced by a vertex $v$ and its descendants (with respect to $u$ as the root) 
and satisfies the following properties:
\begin{enumerate}
 \item $T_v$ contains a path on $k$ vertices;
 \item $T_v \smallsetminus v$ does not contain a path on $k$ vertices.
\end{enumerate}
The algorithm $\mathtt{PVCPTree}$ systematicaly searches for a properly rooted tree $T_v$, puts $v$ into a solution and removes $T_v$ from the input tree $T$.


\begin{function}[H]
  \caption{PVCPTree(T,k)}
  \KwIn{A tree $T$ on $n$ vertices and a positive integer $k$\;}
  \KwOut{A $k$-path vertex cover $S$ of $T$\;}
  \SetArgSty{textbb}   
   Form an arbirary vertex $u\in T$, make $T$ rooted in $u$\;
   $S := \emptyset$\;
   \While{$T$ contains a properly rooted subtree $T_v$ }
   {            
       $S := S \cup \{v\}$\;
       $T := T\smallsetminus T_v$\;
   }
   \Return $S$\;
\end{function}


\begin{theorem}\label{tree}
 Let $T$ be a tree and $k$ be a positive integer.
 The algorithm $\mathtt{PVCPTree}(T,k)$ returns an optimal $k$-path vertex cover of $T$ of size at most $\frac{|V(T)|}{k}$.
Therefore, $\psi_k(T) \le \frac{|V(T)|}{k}.$
\end{theorem}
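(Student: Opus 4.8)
The plan is to verify three things about the output $S$ of $\mathtt{PVCPTree}$: that the main loop is well defined and halts with a remainder free of paths on $k$ vertices, that $S$ is indeed a $k$-path vertex cover, and that no $k$-path vertex cover can be smaller than $S$; the size bound $|S|\le |V(T)|/k$ then drops out of the same bookkeeping.

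First I would record the structural fact that drives the loop: whenever a rooted tree still contains a path on $k$ vertices, it contains a properly rooted subtree. To see this, among all vertices $v$ for which $T_v$ contains such a path choose one of maximum depth. Then no child $c$ of $v$ has a $P_k$ inside $T_c$, and since $T_v\smallsetminus v$ is the disjoint union of the $T_c$ while a path is connected, $T_v\smallsetminus v$ contains no $P_k$; hence $T_v$ is properly rooted. This guarantees that the loop stops precisely when the remaining tree is $P_k$-free, and since each iteration deletes a $T_v$ with at least $k\ge 1$ vertices, termination is immediate. Let $T_{v_1},\dots,T_{v_p}$ be the subtrees removed, in order, and $S=\{v_1,\dots,v_p\}$.

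Next I would show $S$ covers every path on $k$ vertices, arguing by induction on the iteration count. The key is that $v_i$ is a cut vertex of the current tree separating $T_{v_i}\smallsetminus v_i$ from everything else. Consider any $P_k$ of the original tree that meets $V(T_{v_1})$. If it lies entirely inside $T_{v_1}$ it must use $v_1$, because $T_{v_1}\smallsetminus v_1$ is $P_k$-free; if it also uses a vertex outside $T_{v_1}$, then, being connected, it must traverse the cut vertex $v_1$. Either way $v_1\in S$ hits it. Every remaining $P_k$ avoids $V(T_{v_1})$ and therefore lives in $T\smallsetminus T_{v_1}$, to which the inductive hypothesis applies; at the end the leftover tree has no $P_k$ at all, so all paths on $k$ vertices of $T$ are covered.

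Finally comes optimality together with the numerical bound. The removed subtrees are pairwise vertex-disjoint, since each iteration physically deletes its vertices, and each $T_{v_i}$, being an induced subtree of $T$ that contains a path on $k$ vertices, hosts a $P_k$ of $T$. Any $k$-path vertex cover $C$ of $T$ must contain a vertex of each such $P_k$, hence at least one vertex in each $V(T_{v_i})$; by disjointness $|C|\ge p=|S|$, so $S$ is optimal and $\psi_k(T)=p$. Since each $T_{v_i}$ has at least $k$ vertices and the $V(T_{v_i})$ are disjoint, $pk\le |V(T)|$, whence $\psi_k(T)=|S|=p\le |V(T)|/k$. I expect the main obstacle to be the covering step: one has to be careful that deleting all of $T_{v_i}$ but banking only the single vertex $v_i$ loses no uncovered path, and this is exactly where the cut-vertex property and the defining condition on $T_v\smallsetminus v$ must be combined.
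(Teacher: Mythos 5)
Your proof is correct, and it is worth noting where it coincides with and where it departs from the paper's own argument. The skeleton is the same -- both analyze the algorithm through the properly rooted subtrees it removes -- but you establish optimality by a genuinely different mechanism. The paper proceeds by induction on $|V(T)|$, using the recurrence $\psi_k(T)=\psi_k(T\smallsetminus T_v)+1$ for a properly rooted subtree $T_v$, and leaves two facts implicit: that a properly rooted subtree exists whenever the tree contains a path on $k$ vertices, and that adding only $v$ to a cover of $T\smallsetminus T_v$ secures every path meeting $T_v$. You prove both explicitly (the deepest-vertex choice, and the cut-vertex argument), which is exactly the care the paper's one-line recurrence needs. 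For optimality itself you replace the induction by a packing argument: the removed subtrees $T_{v_1},\dots,T_{v_p}$ are pairwise vertex-disjoint and each contains a path of order $k$, so any cover needs at least $p$ vertices, matching $|S|=p$; the size bound $pk\le |V(T)|$ falls out of the same disjointness. This route is slightly longer but more self-contained, and it buys an extra fact the paper's induction does not state: in trees the minimum $k$-path vertex cover equals the maximum number of vertex-disjoint paths of order $k$, i.e.\ the algorithm simultaneously certifies optimality by exhibiting a dual packing. The paper's inductive formulation, in exchange, is terser and dovetails with how the recurrence is reused conceptually elsewhere (e.g.\ in the randomized bound of Section~\ref{sparse}, which invokes Theorem~\ref{tree} as a black box).
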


\begin{proof}
First, we shall argue that $\mathtt{PVCPTree}$ returns an optimal solution. We prove this by induction on the number of vertices in the tree $T$.  If $T$ does not contain any path on $k$ vertices, the empty set is the optimal solution. Suppose $T$ contains a path on $k$ vertices. Let $T_v$ be a properly rooted subtree of $T$. Since any $k$-path cover of $T$ contains a vertex of $T_v$ it follows that $\psi_k(T)=\psi_k(T\smallsetminus T_v)+1$ and hence the result follows by induction.

To prove that the returning set $S$ contains at most $\frac{|V(T)|}{k}$ vertices, we argue that
each loop of the algorithm inserts one vertex into $S$ and removes from $T$ one properly rooted subtree having at least $k$ vertices. 
\end{proof}

Concerning the time complexity of the algorithm, it is straightforward to implement the algorithm $\mathtt{PVCPTree}$ such that the returning set $S$ in computed in linear time.

%

\section{Outerplanar graphs}
\label{outer}
In this section we study the 3-path vertex cover of outerplanar graphs.

\begin{theorem}
Let $G$ be an outerplanar graph of order $n$. Then $\psi_3(G)\le \frac{n}{2}$.
\label{th:out}
\end{theorem}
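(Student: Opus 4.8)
The plan is to rephrase the statement in terms of the dissociation number. Since $\psi_3(G)=n-diss(G)$, the inequality $\psi_3(G)\le n/2$ is equivalent to $diss(G)\ge n/2$, i.e.\ to exhibiting a set $D\subseteq V(G)$ with $|D|\ge n/2$ such that $G[D]$ has maximum degree at most $1$. I would prove $diss(G)\ge n/2$ by induction on $n$, using the fact that outerplanarity is closed under taking subgraphs. First I would dispose of the easy reductions. If $G$ is disconnected, I apply induction to each component and add the resulting dissociation sets. If $G$ has an isolated vertex $v$, I add $v$ to a maximum dissociation set of $G-v$. If $G$ has a vertex $v$ of degree $1$ with neighbour $u$, I take a maximum dissociation set $D'$ of $G-\{u,v\}$ and set $D=D'\cup\{v\}$; since $v$'s only neighbour $u$ is deleted, $v$ is isolated in $G[D]$ and no new conflict arises, so $|D|\ge (n-2)/2+1=n/2$ (the parity works out because $diss$ is integer-valued). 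Finally, since adding edges cannot increase $diss$, once the minimum degree is at least $2$ I may assume $G$ is $2$-connected and, after adding non-crossing chords, a triangulated polygon (maximal outerplanar graph), so that the minimum-degree vertex is an \emph{ear tip}.

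So the core case is an ear tip $v$ of a triangulated polygon with $n\ge 4$: $v$ has degree $2$, its neighbours $a,b$ are adjacent via a chord $ab$, and the triangle on the other side of $ab$ is $abz$. Here I would recurse on $G-v$, which is again a triangulated polygon on $n-1$ vertices, and take a maximum dissociation set $D'$ with $|D'|\ge (n-1)/2$. The natural move is $D=D'\cup\{v\}$. Adding $v$ is harmless unless it creates a vertex of degree $2$ in $G[D]$, and because the only neighbours of $v$ are $a$ and $b$, the \emph{only} way this can happen is that both $a$ and $b$ lie in $D'$; since $ab$ is an edge and $G[D']$ has maximum degree $1$, this forces $a$ and $b$ to be matched to each other in $G[D']$. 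In every other situation $D=D'\cup\{v\}$ is a valid dissociation set of size $|D'|+1\ge\lceil (n-1)/2\rceil+1\ge n/2$, and the induction closes (for both parities of $n$).

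The main obstacle is precisely the forced-violation branch: $a,b\in D'$ matched to each other. A plain deletion of $v$ then yields only $|D|\ge (n-1)/2$, which is short of $n/2$ by one half for odd $n$, so I must recover one extra vertex by a local exchange. The idea is to drop $b$ from $D'$ and add $v$ (this keeps the size and restores validity, since $v$ becomes matched to $a$), and then argue that the chord-opposite vertex $z$, now freed because $a$ is no longer matched to $b$, can be inserted into $D$, giving a net gain of one. Making this exchange go through requires exploiting the maximality of $D'$ together with the triangle structure around the chord $ab$ (to control the other neighbours of $z$), and this bookkeeping about the neighbourhoods of the base vertices $a,b,z$ is the delicate part of the argument; if a clean exchange cannot always be guaranteed, one may have to choose the ear more carefully or strengthen the induction hypothesis with a boundary condition on the base vertices. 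An alternative route that sidesteps the local surgery is a global extremal argument: take a \emph{maximum} dissociation set $D$, observe that every $s\in V\smallsetminus D$ either has two neighbours in $D$ or a neighbour matched inside $D$, and then derive $|V\smallsetminus D|\le |D|$ by counting the edges between $D$ and its complement against the outerplanar bound $m\le 2n-3$.
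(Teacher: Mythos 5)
Your reduction to a maximal outerplanar completion and the easy reductions are sound (the completion step is also how the paper begins), but the core of your induction has two genuine gaps. First, your case analysis for $D=D'\cup\{v\}$ is wrong: you claim the only obstruction is that both $a$ and $b$ lie in $D'$, but the set also fails when exactly one of them, say $a$, lies in $D'$ and is matched in $G[D']$ to a third vertex $w\neq b$ --- then adding $v$ gives $a$ degree $2$ in $G[D]$. This branch is never addressed. Second, in the branch you do treat ($a,b\in D'$ matched to each other), the proposed exchange does not work: after replacing $b$ by $v$, the vertex $a$ is matched to $v$, not ``freed'', and since $z$ is adjacent to $a$ (triangle $abz$), inserting $z$ creates a degree-$2$ vertex at $a$ again. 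In fact no exchange confined to these four vertices can help: $v,a,b,z$ induce $K_4$ minus the edge $vz$, so at most two of them can ever belong to a dissociation set, and $D'$ already contains two of them; the deficit of one (which, as you note, arises for odd $n$) cannot be recovered locally. Your hedge (``choose the ear more carefully or strengthen the induction hypothesis'') concedes exactly this point, and that missing argument is the hard content of the theorem, not bookkeeping.

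Your fallback counting route also cannot close as stated. The only property of $D$ it uses is insertion-optimality (every $s\notin D$ has two neighbours in $D$ or a neighbour matched inside $D$), and that property together with $m\le 2n-3$ does not imply $|D|\ge n/2$: in the star $K_{1,n-1}$ (outerplanar, $m=n-1$) the set consisting of the centre and one leaf is insertion-optimal and has size $2$. Since the proposed counting cannot distinguish such a set from a truly maximum one, no derivation of $|V\smallsetminus D|\le|D|$ from these two facts alone can exist. For contrast, the paper's proof avoids ear-by-ear surgery entirely: in the maximal outerplanar completion it colours vertices white (degree $2$) or black (otherwise), and either every outer edge has a white end --- in which case the black vertices form a $3$-path vertex cover of size $n/2$ --- or a minimality argument over bad edges produces a triangular face $v_iv_{i+1}v_j$ with all three vertices black such that colours alternate along the arc $v_{i+1},\dots,v_j$. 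It then deletes that entire arc together with $v_i$ (an even number $2s+2$ of vertices), applies induction to the rest, and adds back $v_i$ plus the $s$ black arc vertices other than $v_{i+1}$, landing exactly on $n/2$. Removing a whole even fan in one step is what sidesteps the parity obstruction that your single-ear induction runs into; salvaging your approach would require a strengthened induction hypothesis that essentially rebuilds this structure.
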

\begin{proof}
We prove the statement by induction on the number of vertices of $G$. 

Let $H$ be a maximal outerplanar graph such that $G$ is its subgraph satisfying $V(H)=V(G)$. (Recall that $H$ is \emph{maximal outerplanar} if $H$ is outerplanar, but adding any edge destroys that property.) It is easy to see that $H$ is 2-connected and all its inner faces are triangles.
Observe that every $k$-path vertex cover in $H$ is a $k$-path vertex cover in $G$, since $G$ is a subgraph of $H$. Therefore, it suffices to find a $3$-path vertex cover in $H$ of size at most $\frac{n}2$.

Obviously, if $H$ consists of a single triangle, then $\psi_3(H)=1\le \frac32 =\frac{n}2$. 

Assume $H$ has at least four vertices. Since $H$ is a 2-connected outerplanar graph, the closed trail bounding the outer face contains each vertex of $H$ precisely once, hence $H$ is Hamiltonian. Let $v_1,v_2,\dots,v_n$ be the cyclic ordering of vertices of $H$ along the Hamiltonian cycle. Colour a vertex $v_i$ white, if the degree of $v_i$ in $H$ is $2$, otherwise colour it black. Since all the inner faces of $H$ are triangles, there are no two consecutive white vertices, unless $H$ consists of a single triangle, which is excluded. Hence, the white vertices induce an independent set. The edge $v_iv_{i+1}$ is called \emph{good}, if it has a white endvertex, otherwise it is \emph{bad}.
If all the edges incident with the outer faces are good, it implies that $n$ is even, and half of the vertices are white. Then the set of all black vertices is a $3$-path vertex cover of size $\frac{n}2$, since the white vertices form an independent set.

Assume there is a bad edge, i.e.~there is at least one pair of consecutive black vertices, say $v_i,v_{i+1}$. 
We claim that there is an edge $v_iv_j$ of $H$ such that $v_i$, $v_{i+1}$, and $v_j$ form a triangular face 
adjacent to the outer face, all the three vertices $v_i$, $v_{i+1}$, and $v_j$ are black, and all the edges
$v_{i+1}v_{i+2}$,$v_{i+2}v_{i+3}$,\dots,$v_{j-1},v_j$ (indices taken modulo $n$) are good.

For the sake of contradiction, suppose that this is not the case. Let $e=v_{i}v_{i+1}$ be a bad edge. 
There is a unique triangular face of $H$ incident both with $v_{i}$ and $v_{i+1}$; let $v_{j}$ be the third vertex incident with the face. It is easy to see that $v_{j}$ must be black. The vertices $v_{i}$, $v_{i+1}$, and $v_{j}$ cut the Hamiltonian cycle into the edge $v_{i}v_{i+1}$ and two paths; let $\sigma(e)$ be the smaller of their lengths. 

Let $e_0=v_{i_0}v_{i_0+1}$ be the bad edge with $\sigma(e)$ minimal; let $v_{j_0}$ be the corresponding black vertex such that together with $v_{i_0}$ and $v_{i_0+1}$ they form a triangular face of $H$. Without loss of generality assume that $P=v_{i_0+1}$,\dots,$v_{j_0}$ is the path of length $\sigma(e)$. Observe that $P$ contains at least one white vertex, thus, it contains good edges. 
If all the edges $v_{i_0+1}v_{i_0+2}$,\dots,$v_{j_0-1}v_{j_0}$ are good, we are done. Otherwise, we find a bad edge $e_1=v_{i_1}v_{i_1+1}$ of $P$.
Again, there is a unique triangular face of $H$ incident both with $v_{i_1}$ and $v_{i_1+1}$; let $v_{j_1}$ be the third (black) vertex incident with the face. Since $\{v_{i_0+1},v_{j_0}\}$ is a 2-cut of $H$ separating the vertices $v_{i_0+2}$,\dots,$v_{j_0-1}$ from the rest of the graph, we have $v_{j_1}\in\{v_{i_0+1},\dots, v_{j_0}\}$. But then $\sigma(e_1)<\sigma(e_0)$, a contradiction with the choice of $e_0$.

Therefore, there is an edge $e=v_iv_j$ of $H$ such that $v_i$, $v_{i+1}$, and $v_j$ form a triangular face adjacent to the outer face, all the three vertices $v_i$, $v_{i+1}$, and $v_j$ are black, and all the edges $v_{i+1}v_{i+2}$,\dots,$v_{j-1}v_j$ are good. It means that on the path $P=v_{i+1},\dots,v_{j}$ black and white vertices alternate, thus, $\sigma(e)=2s$ for some integer $s$. 

Let $W$ be the set of white vertices on $P$, let $B$ be the set of black vertices on $P$, including $v_{i+1}$ and $v_j$. Obviously $|W|=s$ and $|B|=s+1$; vertices from $W$ induce an independent set in $H$ and $v_{i+1}$ is adjacent to precisely one of them.
Let $S'$ be a $3$-path vertex cover in the graph $H'=H\setminus\{v_i,v_{i+1},\dots,v_j\}$ of size at most $\frac{|V(G')|}2 = \frac{n-2s-2}{2}$ given by induction. Then $S=S'\cup \{v_{i}\}\cup(B\setminus\{v_{i+1}\})$ is a $3$-path vertex cover in $H$ of size at most $\frac{n-2s-2}{2}+1+s =\frac{n}2$.
\end{proof}

The bound $\frac{n}2$ in Theorem \ref{th:out} is the best possible, since it is easy to find outerplanar graphs with $\psi_3\ge \frac{n}2$.
Consider an arbitrary 2-connected outerplanar graph $G$. Let $v_1,v_2,\dots,v_n$ be the ordering of its vertices along the Hamiltonian cycle (the boundary of the outer face). For each edge $v_iv_{i+1}$ incident with the outer face ($v_{n+1}=v_1$), add a new vertex $u_i$ and two new edges $u_iv_i$ and $u_iv_{i+1}$. Let the resulting graph be $H$. It is easy to see that $H$ is a 2-connected outerplanar graph on $2n$ vertices. We claim that $\psi_3(H)\ge n$. Let $S$ be a $3$-path vertex cover in $H$. Divide the vertices of $H$ into $n$ pairs of the form $v_i,u_i$. 
Observe that if $u_i\notin S$ and $v_i\notin S$, then $u_{i-1}\in S$ and $v_{i-1}\in S$ (otherwise there is a path on 3 vertices in $H$ not covered by $S$). Therefore, the average number of vertices in $S$ is either at least $\frac12$ (if at least one vertex of a pair is in $S$), or $\frac24$ (if no vertex is in $S$, then in the previous pair both are). Altogether, $|S|\ge n$.

%
%

\section{Upper bounds on degree of vertices}
\label{sparse}

In this section we provide several upper bounds on path vertex cover based on degrees of a graph. First, recall   theorem of Caro \cite{Caro} and Wei \cite{Wei}, which states 
$$\psi_2(G) \le |V(G)| - \sum_{u\in V(G)}{\frac{1}{1+d(u)}}$$
for every graph G. Since the only graphs for which this is best-possible are the disjoint unions
of cliques, additional structural assumptions 
allow improvements
(see e.g.~\cite{Harant1,Harant3,Heckman1,Heckman2,Lowenstein}).

The problem of dissociation number of graph was also studied in \cite{Goring}, where G{\" o}ring et~al. proved
$$\psi_3(G) \le |V(G)| - \sum_{u\in G}{\frac{1}{1+d(u)}} - \sum_{uv\in E(G)}{\frac{2}{|N(u)\cup N(v)| (|N(u)\cup N(v)|-1)}}. $$

In what follows, we provide the following generalization of Caro-Wei theorem for $\psi_k(G)$.

\begin{theorem}
 For every graph $G$:
$$\psi_k(G) \le |V(G)| - \displaystyle \frac{k-1}{k} \sum_{u\in G}{\frac{2}{1+d(u)}}. $$
\end{theorem}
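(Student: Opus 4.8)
The plan is to exploit the identity $\psi_k(G)=|V(G)|-\mu_k(G)$, where $\mu_k(G)$ denotes the maximum order of an induced subgraph of $G$ that contains no path on $k$ vertices (a $P_k$-free set); it then suffices to exhibit such a set of order at least $\frac{k-1}{k}\sum_{u}\frac{2}{1+d(u)}$. Since for $k=2$ a $P_k$-free set is exactly an independent set and the target reduces to the Caro--Wei bound $\sum_u\frac{1}{1+d(u)}$, I would look for a randomized construction generalizing the standard permutation proof of Caro--Wei.

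Concretely, I would draw a uniformly random linear order $\pi$ of $V(G)$ and, for each vertex $v$, record its rank $\rho(v)=|\{u\in N[v]:\pi(u)\le\pi(v)\}|$ within its closed neighborhood $N[v]$; this rank is uniform on $\{1,\dots,1+d(v)\}$. The Caro--Wei set $\{v:\rho(v)=1\}$ has expected size $\sum_u\frac{1}{1+d(u)}$; instead I would take the larger set $S=\{v:\rho(v)\le 2\}$ of all vertices that are first or second among their closed neighborhood. For a vertex of degree at least $1$ one has $\Pr[v\in S]=\frac{2}{1+d(v)}$, so $\mathbb{E}[|S|]=\sum_u\frac{2}{1+d(u)}$ (for an isolated vertex this probability is $1$ rather than the value $2$ suggested by the formula, so the clean bound is obtained under the harmless assumption $\delta(G)\ge 1$, such vertices lying on no $P_k$).

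The crux is the structural claim that $G[S]$ is always a \emph{forest}. To see this I would orient every edge of $G[S]$ from its $\pi$-earlier to its $\pi$-later endpoint, obtaining an acyclic orientation. Because $v\in S$ means at most one neighbor of $v$ precedes it in $\pi$, every vertex of $G[S]$ has in-degree at most $1$; as the $\pi$-minimal vertex of each component has in-degree $0$, a component on $m$ vertices carries at most $m-1$ edges and is therefore a tree.

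Finally I would trim $G[S]$ to destroy all its copies of $P_k$ cheaply, and here Theorem~\ref{tree} supplies exactly the needed factor: the forest $G[S]$ admits a $k$-path vertex cover $C$ with $|C|\le |S|/k$, so $S\setminus C$ induces a $P_k$-free subgraph of $G$ of order at least $\frac{k-1}{k}|S|$. Hence $\psi_k(G)\le |V(G)|-\frac{k-1}{k}|S|$ for every order $\pi$; fixing an order with $|S|\ge\mathbb{E}[|S|]$ (which exists by averaging) gives the theorem. I expect the main obstacle to be establishing the forest claim together with the realization that the slack $\frac{k-1}{k}$ in the statement is precisely what the tree bound $|V(T)|/k$ buys; the bookkeeping for degree-$0$ vertices is the only other point requiring care.
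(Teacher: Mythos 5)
Your proposal is correct and takes essentially the same route as the paper: the paper likewise draws a random ordering of $V(G)$, keeps each vertex that precedes all but at most one of its neighbours (phrased there as a greedy rule whose success probability is exactly your rank-$\le 2$ event, with value $\frac{2}{1+d(v)}$), observes that the resulting set induces a $1$-degenerate graph and hence a forest, and applies Theorem~\ref{tree} to cover that forest with at most $|S|/k$ vertices, yielding $\psi_k(G)\le |V(G)|-\frac{k-1}{k}|S|$ followed by the same averaging step. One remark: your caveat about isolated vertices is not mere bookkeeping but an actual correction of the paper, which silently treats $\frac{2}{1+d(u)}=2$ as a probability; indeed the stated bound fails for graphs with isolated vertices when $k\ge 3$ (for $G=K_1$ the right-hand side is $\frac{2-k}{k}<0$ while $\psi_k(G)=0$), so your assumption $\delta(G)\ge 1$ is needed for the statement to hold.
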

\begin{proof}
 Let us arbitrarily order the vertices of $G$, and let us start with $S$ being the empty set. One by one let us add
vertex $v_i$ to $S$ unless at least two of its neighbours are already there. The probability that $v_i$
will eventually land in $S$, is $\frac{2}{1 + d(v_i)}$, because it is the probability that in random
ordering of vertices of $G$, $v_i$ precedes each of its neighbours except for one. From this one can deduce
that the expected size of the set $S$ is at least $\sum_{v_i\in V(G)}{\frac{2}{(1 + d(v_i))}}$.
Since $S$ is a $1$-degenerated graph, $S$ is a forest. Using Theorem~\ref{tree} we get  $\psi_k(S) \le \frac{1}{k} |V(S)|$. Finally, to construct a $k$-path vertex cover for $G$ we put into solution $V(G)\smallsetminus S$ and all the vertices forming the minimum $k$-path vertex cover of $S$.
\end{proof}

In the rest of this section, we investigate the upper bounds to the $3$-path vertex cover based on maximum and average degree of a graph. First, recall a well known decomposition theorem of Lov{\'a}sz~\cite{Lovasz}:

\begin{theorem}
\label{Tlovasz}
If $s$ and $t$ are non-negative integers, and if $G$ is a graph with maximum degree at most $s + t + 1$, then the vertex set of $G$ can be partitioned into two sets which induce subgraphs of maximum degree at most $s$ and $t$, respectively.
\end{theorem}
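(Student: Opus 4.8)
The plan is to prove the statement (Theorem~\ref{Tlovasz}) by an extremal, local-optimality argument: among all partitions of $V(G)$ into two parts I would single out one that minimizes a suitably weighted count of the edges lying inside the parts, and then show that this minimizing partition automatically satisfies both degree constraints. Since $V(G)$ is finite there are only finitely many partitions, so such a minimizer certainly exists.

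Concretely, for a partition $(A,B)$ of $V(G)$ let $e(A)$ and $e(B)$ denote the numbers of edges of $G$ with both endvertices in $A$ and in $B$, respectively, and for a vertex $v$ write $d_A(v)$ and $d_B(v)$ for the number of its neighbours in $A$ and in $B$, so that $d(v)=d_A(v)+d_B(v)$. I would choose $(A,B)$ so as to minimize
$$g(A,B) = (t+1)\,e(A) + (s+1)\,e(B).$$
The key step is to exploit that no single-vertex move can decrease $g$. If $v\in A$, then moving $v$ from $A$ to $B$ removes its $d_A(v)$ edges inside $A$ and creates $d_B(v)$ edges inside $B$, so $g$ changes by $(s+1)d_B(v)-(t+1)d_A(v)$; minimality forces
$$(s+1)\,d_B(v) \ge (t+1)\,d_A(v).$$

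From this inequality together with $d_A(v)+d_B(v)=d(v)\le s+t+1$ I would isolate $d_A(v)$: substituting $d_B(v)\ge \frac{t+1}{s+1}\,d_A(v)$ yields $d(v)\ge d_A(v)\cdot\frac{s+t+2}{s+1}$, hence $d_A(v)\le \frac{s+1}{s+t+2}(s+t+1) = (s+1)\bigl(1-\tfrac{1}{s+t+2}\bigr) < s+1$. As $d_A(v)$ is an integer this gives $d_A(v)\le s$, i.e.\ $G[A]$ has maximum degree at most $s$. Since $g$ is symmetric under simultaneously swapping $A\leftrightarrow B$ and $s\leftrightarrow t$, the identical argument applied to a vertex $v\in B$ yields $d_B(v)\le t$, so $G[B]$ has maximum degree at most $t$, which completes the proof.

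The one point that needs care---and the reason I weight the edge counts by $t+1$ and $s+1$ rather than by $t$ and $s$---is the behaviour at the extreme values $s=0$ or $t=0$. With the naive weights $s,t$ the move inequality degenerates when one weight vanishes, whereas the strict inequality $d_A(v)<s+1$ obtained above holds for every non-negative $s$ (for instance $s=0$ gives $d_A(v)<1$, forcing $d_A(v)=0$), so the $+1$ shift makes the boundary cases go through uniformly. The main (and essentially only) obstacle is thus verifying that the chosen weighting delivers a strict inequality that rounds down correctly across the whole range of $s$ and $t$.
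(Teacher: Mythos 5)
Your proof is correct, but there is nothing in the paper to compare it against: the paper does not prove Theorem~\ref{Tlovasz} at all. It merely quotes it as a ``well known decomposition theorem of Lov\'asz'' with a citation to his 1966 paper, and then uses it (via its $k$-set generalization, Theorem~\ref{decomposition}) to derive Corollary~\ref{cubic_half}. What you have supplied is essentially Lov\'asz's original argument: take a partition $(A,B)$ minimizing the weighted internal-edge count $(t+1)e(A)+(s+1)e(B)$, use the fact that no single-vertex switch can decrease this quantity to get $(s+1)d_B(v)\ge (t+1)d_A(v)$ for $v\in A$, and combine with $d(v)\le s+t+1$ to force $d_A(v)<s+1$, hence $d_A(v)\le s$ by integrality (and symmetrically $d_B(v)\le t$ for $v\in B$). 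Each step checks out: the minimizer exists by finiteness, the accounting of the change in the objective under a switch is right, and the choice of weights $t+1$ and $s+1$ (rather than $t$ and $s$) is exactly what makes the inequality strict and the boundary cases $s=0$ or $t=0$ go through, as you point out. A side benefit of this proof, worth noting, is that it is algorithmic: each switch strictly decreases an integer-valued potential bounded below by $0$, so repeated switching terminates in polynomially many steps, which is in the spirit of the running-time claim the paper attaches to Theorem~\ref{decomposition}.
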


The following is a straightforward generalization of Theorem \ref{Tlovasz} for $k$-sets (see e.g.~\cite{Cowen}).

\begin{theorem}\label{decomposition}
 For any graph $G$ of maximum degree $\Delta$, the vertex set of $G$ can be partitioned into $k$ sets which induce subgraphs of maximum degree at most $\frac{\Delta}{k}$. Moreover, this can be computed in running time $O(\Delta |E(G)|)$.
\end{theorem}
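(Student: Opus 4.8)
The plan is to prove both the existence of the desired partition and the running-time bound through a single local-search argument built on the potential
$M(V_1,\dots,V_k)=\sum_{i=1}^{k} e(G[V_i])$, the number of \emph{monochromatic} edges (edges whose two endpoints lie in the same class). This is exactly the unweighted specialization of the argument behind Theorem~\ref{Tlovasz}, carried over from two parts to $k$ parts. I would first note that, since there are only finitely many partitions of $V(G)$ into $k$ classes, some partition minimizes $M$, and then show that any such minimizer already satisfies the degree requirement.

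The crux is a local exchange. Fix a partition minimizing $M$, and for a vertex $v$ write $d_j(v)$ for the number of neighbours of $v$ lying in class $V_j$, so that $\sum_{j=1}^{k} d_j(v)=d(v)\le\Delta$. Because the average of the $d_j(v)$ is at most $\Delta/k$ and every $d_j(v)$ is an integer, some class $V_{j_0}$ satisfies $d_{j_0}(v)\le\lfloor\Delta/k\rfloor$. Suppose, for contradiction, that $v$ lies in a class $V_i$ with $d_i(v)>\lfloor\Delta/k\rfloor$. Relocating $v$ from $V_i$ to $V_{j_0}$ destroys $d_i(v)$ monochromatic edges and creates $d_{j_0}(v)$ new ones, so $M$ changes by $d_{j_0}(v)-d_i(v)<0$, contradicting minimality. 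Hence in a minimizing partition every vertex has at most $\lfloor\Delta/k\rfloor$ neighbours in its own class; that is, each $G[V_i]$ has maximum degree at most $\lfloor\Delta/k\rfloor\le\frac{\Delta}{k}$, the degrees being integers.

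To obtain the algorithm I would make this argument constructive: start from an arbitrary partition and repeatedly select a \emph{bad} vertex (one with more than $\lfloor\Delta/k\rfloor$ neighbours in its own class) and move it to a class realizing $\min_j d_j(v)$. By the computation above each such move strictly decreases the integer $M$, which is bounded below by $0$ and is at most $|E(G)|$ initially; therefore the process terminates after at most $|E(G)|$ moves and returns a valid partition.

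The step I expect to require the most care is the bookkeeping needed to reach the stated running time $O(\Delta|E(G)|)$ rather than something larger. The plan is to maintain, for each vertex, a counter recording how many of its neighbours currently share its class, together with a list of the bad vertices; these counters can be initialized in $O(|E(G)|)$ time by one pass over the edges. When a vertex $v$ is relocated, recomputing $d_j(v)$ across all classes costs $O(d(v))=O(\Delta)$, and updating the own-class counters and bad/good status of the neighbours of $v$ costs a further $O(\Delta)$, while no other vertex's counter can change. Charging $O(\Delta)$ to each of the at most $|E(G)|$ moves then yields the claimed $O(\Delta|E(G)|)$ bound.
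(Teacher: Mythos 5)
Your proof is correct. The paper does not actually prove Theorem~\ref{decomposition} at all --- it states the result as a ``straightforward generalization'' of Theorem~\ref{Tlovasz} and defers to \cite{Cowen} --- and your local-search argument (minimizing the monochromatic-edge potential $M$, moving a bad vertex to a class with at most $\lfloor\Delta/k\rfloor$ of its neighbours, and charging $O(\Delta)$ work to each of the at most $|E(G)|$ strictly $M$-decreasing moves) is precisely the standard argument behind both that reference and Lov\'asz's theorem, so it supplies exactly the proof the paper leaves implicit.
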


\begin{corollary}\label{cubic_half}
Let $G$ be a graph of maximum degree $\Delta$. 
Then  $$\psi_3(G)\le \frac{\lceil \frac{\Delta-1}{2} \rceil}{\lceil \frac{\Delta+1}{2} \rceil} |V(G)|.$$
Moreover, such a solution can be computed in running time $O(\Delta|E(G)|)$.
\end{corollary}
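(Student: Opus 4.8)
The plan is to reduce the bound on $\psi_3(G)$ to the existence of a large dissociation set, and then to produce such a set directly from the vertex partition guaranteed by Theorem~\ref{decomposition}. Recall from the introduction that $\psi_3(G)=|V(G)|-diss(G)$, where $diss(G)$ is the maximum size of a set inducing a subgraph of maximum degree at most $1$; indeed, if $D\subseteq V(G)$ induces a subgraph of maximum degree at most $1$, then no path on three vertices lies entirely in $D$, so $V(G)\setminus D$ is a $3$-path vertex cover. Thus it suffices to exhibit a dissociation set whose complement has the required size.

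First I would set $k=\lceil\frac{\Delta+1}{2}\rceil$ and apply Theorem~\ref{decomposition} to partition $V(G)$ into $k$ sets $V_1,\dots,V_k$, each inducing a subgraph of maximum degree at most $\frac{\Delta}{k}$. The key arithmetic observation is that with this choice one has $k>\frac{\Delta}{2}$, hence $\frac{\Delta}{k}<2$; since the maximum degree of an induced subgraph is an integer, each $V_i$ in fact induces a subgraph of maximum degree at most $1$, i.e.\ every part $V_i$ is a dissociation set.

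By the pigeonhole principle the largest part $V_j$ satisfies $|V_j|\ge\frac{|V(G)|}{k}$, so $diss(G)\ge\frac{|V(G)|}{k}$, and $S=V(G)\setminus V_j$ is a $3$-path vertex cover with $|S|\le |V(G)|\left(1-\frac{1}{k}\right)=\frac{k-1}{k}|V(G)|$. To finish I would verify the identities $k=\lceil\frac{\Delta+1}{2}\rceil=\lfloor\frac{\Delta}{2}\rfloor+1$ and $k-1=\lceil\frac{\Delta-1}{2}\rceil$ (checking the even and odd cases for $\Delta$ separately), which rewrite $\frac{k-1}{k}$ as the stated ratio $\frac{\lceil(\Delta-1)/2\rceil}{\lceil(\Delta+1)/2\rceil}$. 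The running-time claim then follows, since the partition is produced in $O(\Delta|E(G)|)$ by Theorem~\ref{decomposition}, while selecting the largest part and forming its complement costs only $O(|V(G)|)$ more.

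The step I expect to be the main obstacle --- really the only subtle point --- is justifying that the fractional degree bound $\frac{\Delta}{k}$ coming from Theorem~\ref{decomposition} collapses to the integral bound $1$ required for a dissociation set; everything else is the pigeonhole averaging together with elementary ceiling/floor bookkeeping. I would also double-check the degenerate small-$\Delta$ cases (such as $\Delta\le 1$) to confirm that both the formula and the partition size remain consistent there.
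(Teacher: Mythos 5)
Your proposal is correct and follows essentially the same route as the paper: apply Theorem~\ref{decomposition} with $k=\lceil\frac{\Delta+1}{2}\rceil$ parts, observe that each part induces a subgraph of maximum degree at most $1$, take the largest part by pigeonhole, and output its complement as the $3$-path vertex cover. Your explicit justification of the integrality collapse ($\Delta/k<2$ implies maximum degree at most $1$) and the ceiling arithmetic merely spell out steps the paper leaves implicit.
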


\begin{proof}
According to Theorem~\ref{decomposition}, the vertex set of $G$ can be partitioned into $\lceil \frac{\Delta+1}{2} \rceil$ sets which induce subgraphs of maximum degree at most $1$. 
Let $S$ denote the largest set of such a partition; clearly $|S|\ge \frac{|V(G)|}{\lceil \frac{\Delta+1}{2} \rceil}$. 
To create a $3$-path vertex cover set for $G$, we protect $V(G)\smallsetminus S$.
\end{proof}

\begin{corollary}\label{subcubic}
 There is a linear time algorithm which returns a $3$-path vertex cover of size at most
$\min(\frac{|V(G)|}{2},\frac{|E(G)|}{2})$, for arbitrary graph $G$ of maximum degree at most $3$.
\end{corollary}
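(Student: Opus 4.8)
The plan is to argue component by component, exploiting the fact that $\psi_3$, the vertex count, and the edge count are all additive over the connected components of $G$. Concretely, I would run the procedure separately on each connected component $G_i$ (on $n_i$ vertices and $m_i$ edges), obtaining a $3$-path vertex cover $S_i$, and return $S=\bigcup_i S_i$; since every path on three vertices lies inside a single component, $S$ is a valid $3$-path vertex cover of $G$. The key reduction is the observation that if I can guarantee $|S_i|\le\min(n_i/2,m_i/2)$ for every component, then summing yields $|S|\le\sum_i n_i/2=n/2$ and simultaneously $|S|\le\sum_i m_i/2=m/2$, hence $|S|\le\min(n/2,m/2)$. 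Computing the components costs $O(n+m)$ time, so the whole procedure stays linear as long as each component is handled in linear time.

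For a single connected component $G_i$ there are exactly two cases, since a connected graph satisfies $m_i\ge n_i-1$, with equality precisely for trees. If $G_i$ contains a cycle, then $m_i\ge n_i$, so $\min(n_i/2,m_i/2)=n_i/2$; here I would apply Corollary~\ref{cubic_half} with $\Delta=3$ (a subcubic graph admits a partition into two parts, each inducing maximum degree at most $1$, whose larger part has at least $n_i/2$ vertices), producing $S_i$ with $|S_i|\le n_i/2$ in time $O(\Delta|E(G_i)|)=O(m_i)$. If instead $G_i$ is a tree, I would run $\mathtt{PVCPTree}$, which by Theorem~\ref{tree} returns an \emph{optimal} cover $S_i$ of size at most $n_i/3$ in linear time.

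It remains to verify that the tree case actually meets $\min(n_i/2,m_i/2)=\min(n_i/2,(n_i-1)/2)=(n_i-1)/2$. The bound $|S_i|\le n_i/3\le n_i/2$ is immediate, and $n_i/3\le(n_i-1)/2$ holds exactly when $n_i\ge 3$; for $n_i\le 2$ the tree contains no path on three vertices, so $\mathtt{PVCPTree}$ returns $S_i=\emptyset$ and the inequality holds trivially. Thus in both cases $|S_i|\le\min(n_i/2,m_i/2)$, which is precisely what the reduction requires.

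The step I expect to require the most care is exactly this tree case: the generic subcubic bound $n/2$ is too weak on sparse, forest-like components, where $m$ can be much smaller than $n$, so one genuinely needs the sharper bound $\psi_3(T)\le n_i/3$ from Theorem~\ref{tree} together with the elementary inequality $n_i/3\le(n_i-1)/2$ and the separate treatment of the small trees carrying no $P_3$. Once these are in place, the additivity of the component-wise $\min$ bounds closes the argument, and the $O(n+m)$ running time follows from the linear costs of component decomposition, $\mathtt{PVCPTree}$, and the degeneracy partition of Corollary~\ref{cubic_half}.
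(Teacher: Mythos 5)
Your proof is correct, but it takes a genuinely different route from the paper's. The paper constructs two covers of the \emph{whole} graph and returns the smaller: for the $|E(G)|/2$ bound it repeatedly selects and deletes any vertex of degree at least $2$ (each selection destroys at least two edges, and when the process stops the residual graph has maximum degree $1$, hence no $P_3$); for the $|V(G)|/2$ bound it invokes Corollary~\ref{cubic_half} exactly as you do. You instead decompose $G$ into connected components and split into cases: cyclic components (where $m_i\ge n_i$ makes the $n_i/2$ bound of Corollary~\ref{cubic_half} already the minimum) versus tree components (where you replace the greedy edge argument by the sharper bound $\psi_3(T_i)\le n_i/3$ of Theorem~\ref{tree}, checking $n_i/3\le (n_i-1)/2$ for $n_i\ge 3$ and handling $n_i\le 2$ trivially). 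Both arguments are sound and linear-time. The paper's version is shorter and its greedy $m/2$ bound needs no subcubic or connectivity assumption at all; your version never needs the greedy argument, and it actually yields a slightly stronger guarantee, namely $\sum_i \min(n_i/2,\,m_i/2)$ with the tree components even contributing only $n_i/3$, which can be strictly below the global $\min(n/2,m/2)$. For the way the corollary is used later (in the analysis of $\mathtt{SPARSE_3}$, where one needs both $d\le (n-s)/2$ and $d\le (m-4s)/2$ on the residual subcubic graph), either construction suffices.
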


\begin{proof}
 To construct a $3$-path vertex cover of size at most $|E(G)|/2$ one can repeatedly put into a solution a vertex of degree at least $2$.
 To construct a $3$-path vertex cover of size at most $|V(G)|/2$ for a graph $G$, $\Delta(G)\le 3$, one can use 
 Corollary~\ref{cubic_half}.
\end{proof}

Next we provide an algorithm which guarantees a tight upper-bound for $\psi_3$ based on the number of vertices and edges of a graph. 

\begin{algorithm}[H]
  \caption{$\mathtt{SPARSE_3}{(G)}$}
  \KwIn{A graph $G$ on $n$ vertices\;}
  \KwOut{$3$-path vertex cover $H$, $H\subseteq V(G)$\;}
  \SetArgSty{textbb}  
  $H := \emptyset$\;
  \While{$G$ contains any vertex $v$ of degree at least $4$}{          
     $H := H \cup \{v\}$\;
     Remove from $G$ the vertex $v$ and all edges incident with $v$\;
  }
  $H:= H\cup \mathtt{Solve\_SubCubic}(G)$\;
  \Return $H$\;
\end{algorithm}
\noindent
Here, $\mathtt{Solve\_SubCubic}(G)$ denotes the algorithm of Corollary~\ref{subcubic}.

\begin{theorem}
 Let $G$ be a graph having $n$ vertices and $m$ edges. The algorithm $\mathtt{SPARSE_3}(G)$ returns a $3$-path vertex cover $H$ of size at most $\frac{2n+m}{6}$.
\end{theorem}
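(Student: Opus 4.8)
The plan is to analyze the two phases of $\mathtt{SPARSE_3}$ separately and then combine the resulting size estimates with a carefully chosen weighting, so that the contribution of the deleted high-degree vertices cancels. First I would verify correctness: every vertex $v$ of degree at least $4$ that is deleted in the while-loop is placed into $H$, and once the loop terminates the remaining graph $G'$ is subcubic, so $\mathtt{Solve\_SubCubic}(G')$ returns a valid $3$-path vertex cover of $G'$ by Corollary~\ref{subcubic}. Any path on three vertices in $G$ either passes through some deleted vertex (and is thereby covered) or lies entirely inside $G'$ (and is covered by the subcubic solution); hence $H$ is a $3$-path vertex cover of $G$.

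For the size bound, let $p$ denote the number of vertices deleted in the while-loop, and let $n' = n - p$ and $m'$ be the numbers of vertices and edges of the final subcubic graph $G'$. The key bookkeeping step is to observe that each deleted vertex had degree at least $4$ at the moment of its deletion, and deleting it removes exactly that many edges, with no edge counted twice across iterations. Consequently $m - m' \ge 4p$, that is, $m' \le m - 4p$, while trivially $n' = n - p$.

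By Corollary~\ref{subcubic}, the subcubic phase contributes at most $\min\!\big(\tfrac{n'}{2}, \tfrac{m'}{2}\big)$ vertices, and is therefore bounded above by any convex combination of these two quantities. I would take the combination with weights $\tfrac{2}{3}$ and $\tfrac{1}{3}$, giving a contribution of at most $\tfrac{2}{3}\cdot\tfrac{n'}{2} + \tfrac{1}{3}\cdot\tfrac{m'}{2} = \tfrac{n'}{3} + \tfrac{m'}{6}$. Substituting the estimates for $n'$ and $m'$ then yields
\[
|H| \le p + \frac{n'}{3} + \frac{m'}{6} \le p + \frac{n-p}{3} + \frac{m-4p}{6} = \frac{2n+m}{6},
\]
where the final equality holds because the coefficient of $p$ is $1 - \tfrac{1}{3} - \tfrac{4}{6} = 0$.

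The main obstacle, and the crux of the argument, is identifying precisely this weighting $(\tfrac{2}{3},\tfrac{1}{3})$: it is exactly the choice that makes the $p$-terms vanish, so that the deleted high-degree vertices are \emph{paid for} by the edges they remove. Using either bound of Corollary~\ref{subcubic} in isolation would leave a residual dependence on $p$ and fail to deliver the clean $\tfrac{2n+m}{6}$ estimate; it is the simultaneous use of both the $\tfrac{n'}{2}$ and $\tfrac{m'}{2}$ bounds, balanced against the edge deficit $m' \le m - 4p$, that closes the proof.
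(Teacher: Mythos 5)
Your proposal is correct and follows essentially the same route as the paper: both track the vertices deleted in the degree-$\geq 4$ phase, use the relations $n' = n - p$ and $m' \le m - 4p$ together with both bounds of Corollary~\ref{subcubic}, and combine them linearly (weights $\tfrac{2}{3},\tfrac{1}{3}$, which is exactly the paper's step from its two inequalities to $3|H| \le n + \tfrac{m}{2}$) so that the $p$-terms cancel. Your explicit verification that $H$ covers all $3$-paths is a minor addition the paper leaves implicit, but the argument is otherwise the same.
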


\begin{proof}
The algorithm consists of two phases. The first phase repeatedly erases vertices of degree at least $4$. After this step, we get a graph $G$ of maximum degree $3$. In the second phase the algorithms uses the method 
$\mathtt{Solve\_SubCubic}(G)$.

Let $s$ denote the number of vertices (of degree at least $4$) protected in the first phase, let $d$ denote the number of vertices protected in the second phase.
From Corollary~\ref{subcubic} we get $s+2d\le n$ and $4s+2d \le m$. Therefore, we get two upper bounds for $|H|$.

$$\setlength\arraycolsep{0.2em}
 \begin{array}{rclclcl}
|H| & = & s + d & \le & s + \frac{n-s}{2}  & = &  \frac{n}{2} + \frac{s}{2}\\
|H| & = & s + d & \le & s + \frac{m-4s}{2} & = & \frac{m}{2} - s
 \end{array}
$$


\noindent This implies $3|H|\le n+\frac{m}{2}$.
\end{proof}

\begin{corollary}
 Let $G$ be a graph on $n$ vertices and $m$ edges. Then $$\psi_3(G)\le \frac{ 2n+m}{6}.$$
\end{corollary}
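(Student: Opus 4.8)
The plan is to read off the corollary directly from the preceding theorem on the algorithm $\mathtt{SPARSE_3}$. By definition $\psi_3(G)$ is the minimum cardinality of a $3$-path vertex cover of $G$, and that theorem produces one particular such cover $H$ with $|H|\le\frac{2n+m}{6}$; minimality then gives $\psi_3(G)\le|H|\le\frac{2n+m}{6}$. So the entire content of the corollary is already carried by the theorem, and only this one-line minimality remark is needed.

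Were I to prove the bound without quoting that theorem, I would rebuild its two-phase strategy. First I would greedily delete vertices of degree at least $4$, one at a time, putting each into the cover; write $s$ for the number of vertices removed in this phase. Each deletion destroys at least four still-present edges, so the phase removes at least $4s$ edges in total, leaving a graph $G'$ on $n-s$ vertices with at most $m-4s$ edges and of maximum degree at most $3$. To $G'$ I would apply Corollary~\ref{subcubic}, which returns a $3$-path vertex cover of the remainder of some size $d$ satisfying both $2d\le n-s$ and $2d\le m-4s$; taking the union with the $s$ deleted vertices yields a $3$-path vertex cover $H$ of the whole graph with $|H|=s+d$.

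The decisive step --- and the one I expect to demand the most care --- is the balancing of the two resulting linear inequalities. From $s+2d\le n$ one gets $|H|=s+d\le\frac{n}{2}+\frac{s}{2}$, while from $4s+2d\le m$ one gets $|H|\le\frac{m}{2}-s$. Neither bound is good enough by itself: the first worsens as $s$ grows and the second improves, so the extremal case is unclear a priori. The trick is to add twice the first inequality to the second, which cancels the $s$-terms and gives $3|H|\le n+\frac{m}{2}$, that is $|H|\le\frac{2n+m}{6}$. The only genuine verification hiding here is that the subcubic solver really does meet both of its size guarantees $\tfrac{1}{2}|V(G')|$ and $\tfrac{1}{2}|E(G')|$ simultaneously; but that is exactly what Corollary~\ref{subcubic} asserts, so once it is invoked the corollary follows.
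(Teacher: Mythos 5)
Your proposal is correct and follows exactly the paper's route: the corollary is indeed an immediate consequence of the theorem on $\mathtt{SPARSE_3}$, and your reconstruction of that theorem's proof (greedy removal of vertices of degree at least $4$, invoking Corollary~\ref{subcubic} on the remaining subcubic graph to get $s+2d\le n$ and $4s+2d\le m$, then combining the two bounds to obtain $3|H|\le n+\frac{m}{2}$) is the same argument the paper gives. Nothing is missing.
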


\begin{theorem}
 Let $a,b$ are integers such that $b\le a \le 2b$. There 
is a graph on $n$ vertices and $m$ edges such that $\frac{m}{n}=\frac{a}{b}$ and $$\psi_3(G)\ge \frac{2n+m}{6}.$$
\end{theorem}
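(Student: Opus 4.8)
The plan is to exploit the fact that the preceding corollary already gives $\psi_3(G)\le (2n+m)/6$ for every graph, so it suffices to exhibit, for each admissible ratio $a/b$, a graph attaining this bound. The constraint $b\le a\le 2b$ says precisely that the target density $m/n=a/b$ lies in $[1,2]$, i.e. the average degree is between $2$ and $4$. My strategy is to find two ``extremal gadgets'' realising the two endpoint densities $1$ and $2$ with $\psi_3=(2n+m)/6$, and then take a disjoint union of suitably many copies of each to interpolate to an arbitrary rational density in $[1,2]$. This works because both $\psi_3$ and the dissociation number are additive over connected components, and because the affine expression $(2n+m)/6$ is itself additive over disjoint unions.

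For the gadgets I would use the two smallest cocktail-party graphs $K_{t\times 2}=\overline{tK_2}$: namely $C_4=K_{2\times 2}$ (with $n=4$, $m=4$, density $1$) and the octahedron $K_{2,2,2}=K_{3\times 2}$ (with $n=6$, $m=12$, density $2$). Using $\psi_3(H)=|V(H)|-diss(H)$, the content reduces to computing the dissociation numbers, and in each case I claim $diss=2$. Indeed, any two vertices trivially induce a subgraph of maximum degree at most $1$, while for any three vertices, since the complement is a perfect matching, at most one of the $\binom{3}{2}$ pairs can be a non-edge; hence the three vertices span at least two edges and so induce a $P_3$ or a triangle, never a dissociation set. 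Thus $\psi_3(C_4)=2=(2\cdot 4+4)/6$ and $\psi_3(K_{2,2,2})=4=(2\cdot 6+12)/6$, so each gadget is exactly tight.

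Finally I would set $G$ to be the disjoint union of $p$ copies of $C_4$ and $q$ copies of $K_{2,2,2}$, where $p=3(2b-a)$ and $q=2(a-b)$. By additivity this gives $n=4p+6q=12b$ and $m=4p+12q=12a$, hence $m/n=a/b$, while $\psi_3(G)=2p+4q=2a+4b=(2n+m)/6$, as required. The two things to verify carefully are: (i) the dissociation computation above, which is the real obstacle, since it must force $diss$ to be \emph{small} (an upper bound on how much one can dissociate, not merely a lower bound); and (ii) that the chosen multiplicities are non-negative integers across the whole range, which follows since $b\le a\le 2b$ and $b\ge 1$ guarantee $p,q\ge 0$ and not both zero, with the endpoints $a=b$ (only $C_4$'s) and $a=2b$ (only octahedra) handled automatically. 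If a connected example were desired one could join the components by a few extra edges, but the statement fixes only the ratio $m/n$, so the disjoint union already suffices.
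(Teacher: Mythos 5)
Your proposal is correct and is essentially the paper's own proof: the paper takes exactly the same disjoint union of $3(2b-a)$ copies of $C_4$ and $2(a-b)$ copies of $K_6$ minus a perfect matching (your octahedron $K_{2,2,2}$), with the same counting of $n$, $m$, and $\psi_3$. The only difference is that you also verify $\psi_3(C_4)=2$ and $\psi_3(K_{2,2,2})=4$ via the dissociation-number argument on cocktail-party graphs, a step the paper states without proof.
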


\begin{proof}
Let $x=3(2b-a)$ and $y=2(a-b)$.
We construct the graph $G$ having two types of components:
\begin{itemize}
 \item $x$ components $C_4$ (a cycle on $4$ vertices) with 4 edges, $\psi_3(C_4)=2$.
 \item $y$ components $H_6$ ($K_6$ with a perfect matching removed) with 12 edges, $\psi_3(H_6)=4$.
\end{itemize} 
It is easy to see that $n=4x+6y$ and $m=4x+12y$. Then 
$$\frac{2n+m}{6}=\frac{12x+24y}{6}=2x+4y=\psi_3(G).$$
Moreover, 
$$\frac{m}{n}=\frac{2x+6y}{2x+3y}=\frac{6(2b-a)+12(a-b)}{6(2b-a)+6(a-b)}=\frac{6a}{6b}=\frac{a}{b}.$$
\end{proof}


\end{document}